\DeclareMathSymbol \tick {\mathord}{AMSa}{"58}
\newcommand* \cross {\mathpalette \cr@ss {}}
\newcommand* \cr@ss@ {.0175\wd\z@}
\newcommand* \cr@ss [1]{{%
        \setbox\z@ = \hbox {$#1\times$}%
        \copy\z@
        \kern-1\cr@ss@
        \copy\z@
        \kern-.965\wd\z@
        \copy\z@
        \kern-1\cr@ss@
        \lower \cr@ss@ \copy\z@
        \kern-\wd\z@
        \raise \cr@ss@ \box\z@
}}
\newtheorem{theorem}{Theorem}
\newtheorem{corollary}[theorem]{Corollary}
\newtheorem{lemma}[theorem]{Lemma}
\newtheorem{proposition}[theorem]{Proposition}
\theoremstyle{definition}
\newtheorem{definition}[theorem]{Definition}
\theoremstyle{remark}
\newtheorem{example}[theorem]{Example}
\newtheorem{remark}[theorem]{Remark}
\newcommand* \ba {\mathop{\rm ba}\nolimits }
\newcommand* \ca {\mathop{\rm ca}\nolimits }
\newcommand* \mycore [1]{\mathop{\rm #1\mathchar`-core}\nolimits }
\newcommand* \bacore {\mycore {ba}}
\newcommand* \cacore {\mycore {ca}}
\newcommand*  \kcore {\mycore \kappa}
\newcommand* \eqarr@ywithdef [1]{%
	\null\,%
	\vcenter {%
		\normalbaselines
		\openup\jot
		\m@th
		\ialign {\strut #1\crcr }%
	}\,%
}
\newcommand* \eqarraywithdef [1]{%
	\eqarr@ywithdef {\ignorespaces #1}%
}
\title{The $\kappa$-core and the $\kappa$-balancedness of TU games\thanks{David Bartl acknowledges the support of the Czech Science Foundation
under grant number GA\v CR 21-03085S.  A part of this research was done
while he was visiting the Corvinus Institute for Advanced Studies; the
support of the CIAS during the stay is gratefully acknowledged. Mikl\'os Pint\'er acknowledges the support by the Hungarian Scientific Research Fund under projects K 133882 and K 119930.}}
\author{David Bartl\thanks{Department of Informatics and Mathematics, School of Business Administration in Karvin\'a, Silesian University in Opava, bartl@opf.slu.cz.} and Mikl\'os Pint\'er\thanks{Corresponding author: Corvinus Center for Operations Research, Corvinus Institute of Advanced Studies, Corvinus University of Budapest, pmiklos@protonmail.com.}}
\begin{document}

\maketitle

\begin{abstract}
We consider transferable utility cooperative games with infinitely many
players. In particular, we generalize the notions of core and
balancedness, and also the Bondareva-Shapley Theorem for infinite
TU-games with and without restricted cooperation, to the cases where the
core consists of $\kappa$-additive set functions.
Our generalized Bondareva-Shapley Theorem extends previous results by \citet{Bondareva1963}, \citet{Shapley1967}, \citet{Schmeidler1967}, \citet{Faigle1989}, \citet{Kannai1969,Kannai1992}, \citet{Pinter2011d} and \citet{BartlPinter2022a}.

\bigskip
\textit{Keywords:} TU games with infinitely many players, Bondareva-Shapley Theorem, $\kappa$-core, $\kappa$-balancedness, $\kappa$-additive set function, duality theorem for infinite LPs

\end{abstract}

\section{Introduction}


The core \citep{Shapley1955,Gillies1959} is definitely one of the most important solution concepts of cooperative game theory. In the transferable utility setting (henceforth games) the Bondareva-Shapley Theorem \citep{Bondareva1963,Shapley1967,Faigle1989} provides a necessary and sufficient condition for the non-emptiness of the core; it states that the core of a game with or without restricted cooperation is not empty if and only if the game is balanced. The textbook proof of the Bondareva-Shapley Theorem goes by the strong duality theorem for linear programs (henceforth LPs), see e.g.\ \citet{PelegSudholter2007}. The primal problem corresponds to the concept of balancedness and so does the dual problem to the notion of core. However, this result is formalized for games with finitely many players. It is a question how one can generalize this result to the infinitely many player case.


The finitely many player case is special in (at least) two counts: (1)~it can be handled by finite linear programs, (2)~since the power set of the player set is also finite, it is natural to take the solution of a game from the set of additive set functions (additive games).

There are two main directions to generalize the notion of additive set function. The first, when we weaken the notion of additivity; this leads to the notion of $k$-additive core \citep{GrabischMiranda2008}, where $k$~is a finite cardinal (natural number). The second, when we use a notion stronger than additivity (e.g.\ $\sigma$-additivity). Naturally, a stronger notion does matter only if there are infinitely many players. This latter approach is considered here.

\citet{Schmeidler1967}, \citet{Kannai1969,Kannai1992}, \citet{Pinter2011d}, and \citet{BartlPinter2022a} considered games with infinitely many players. All these papers studied the additive core; that is, the case when the core consists of bounded additive set functions. \citet{Schmeidler1967} and \citet{Kannai1969} showed that the additive core of a non-negative game without restricted cooperation with infinitely many players is not empty if and only if the game is Schmeidler balanced (Definition~\ref{kiegyensulyozott3}). \citet{BartlPinter2022a} extended these results and showed that the additive core of a game bounded below with our without restricted cooperation with infinitely many players is not empty if and only if the game is (bounded-)Schmeidler balanced.

\citet{Kannai1992} raised the following two research questions: (1)~When does there exist a bounded $\sigma$-additive set function in the core? (2)~When are all elements in the core bounded $\sigma$-additive?

\citet{Kannai1969} gave a necessary and sufficient condition for that the $\sigma$-additive core of a non-negative game without restricted cooperation and with infinitely many players is not empty; that is, he answered Question~(1). This result (the necessary and sufficient condition) is, however, only slightly similar to the classical balancedness condition. Moreover, it works only for non-negative games without restricted cooperation. \citet{Schmeidler1972} and \citet{EinyHolzmanMondererShitovitz1997} answered Question~(2) respectively for exact and for continuous convex games.

In this paper we raise the following question, where $\kappa$~is an infinite cardinal number, and generalize \citeauthor{Kannai1992}'s first question thus: When does there exist a bounded $\kappa$-additive set function in the core? Moreover, we consider this question in the case of games with restricted cooperation too.

Addressing this question, we introduce the notions of $\kappa$-core and $\kappa$-balan\-cedness (Definitions \ref{core} and~\ref{kiegyensulyozott}). Then, we apply the strong duality theorem for infinite LPs by \citet{AndersonNash1987} (Proposition~\ref{duality}) and prove that the $\kappa$-core of a game with or without restricted cooperation and with arbitrarily many players is not empty if and only if the game is $\kappa$-balanced (Theorem~\ref{megszamjatek1}).



The set-up of the paper is as follows. In the next section we introduce the main mathematical notions and results, which are related to infinite LPs, and used in this paper. In Section~\ref{sec2} we introduce the notion of $\kappa$-additive set functions and discuss some related concepts and results.
In Section~\ref{main} we present game theory notions and define various
cores (such as $\kappa$-core) and balancedness conditions (such as
$\kappa$-balancedness) we consider in this paper.
Section~\ref{main2} presents our main result. We give an answer to the question we have raised: there is a bounded $\kappa$-additive set function in the core if and only if the game is $\kappa$-balanced (Theorem~\ref{megszamjatek1}). The last section briefly concludes.

\section{Duality theorem}

In this section we discuss the duality theorem for infinite linear programs that we will use later.

Let $X$~and~$Y$ be real vector spaces; the {\it algebraic dual\/} of~$X$, which is the space of all linear functionals on~$X$, is denoted by~$X'$; similarly $Y'$ denotes the algebraic dual of~$Y$. Moreover, $Y^* \subseteq Y'$ denotes a linear subspace of~$Y'$ such that $(Y, Y^*)$ is a {\it dual pair of spaces}; that is, if $f \in Y$ is non-zero, then there exists a $y \in Y^*$ such that $y(f) \neq 0$. For any linear mapping $A \colon X \to Y$ its {\it adjoint mapping\/} is $A' \colon Y' \to X'$ with $\bigl(A'(y)\bigr)(x) = y\bigl(A(x)\bigr)$ for all $x \in X$ and $y \in Y'$. Moreover, a subset $P \subseteq X$ of the vector space~$X$ is a {\it convex cone\/} if $\alpha x + \beta y \in P$ for all $x,y \in P$ and all non-negative $\alpha,\beta \in \mathbb{R}$. For any two functionals $f,g \colon X \to \mathbb{R}$ we write $f \geq_P g$ if $f(x) \geq g(x)$ for all $x \in P$.

Now, given a linear mapping $A \colon X \to Y$, a point $b \in Y$ and a linear functional $c \colon X \to \mathbb{R}$, let us consider the following infinite LP-pair \citep[cf.\spacefactor=1000 ][Section~3.3]{AndersonNash1987}:
\begin{equation}\label{LP}
\eqarraywithdef {
\hfil $#$& ${}#$\hfil & \hfil ${}#{}$\hfil & $#$\hfil & \qquad\qquad
\hfil $#$& ${}#$\hfil & \hfil ${}#{}$\hfil & $#$\hfil \cr
(\mathrm{P_{LP}}) \qquad\qquad c(x&) & \multispan2${} \to \sup$\hfil &
(\mathrm{D_{LP}}) \qquad\qquad y(b&) & \multispan2${} \to \inf$\hfil \cr
\noalign {\kern\jot}
\hbox {s.t.} \enskip A  (x&) & =      & \hphantom{(} b &
\hbox {s.t.} \enskip A' (y&) & \geq_P &              c \cr
x&& \in & \hphantom{(} P   &
y&& \in &              Y^* }
\end{equation}
where $P \subseteq X$ is a convex cone and $Y^*$~is a subspace of~$Y'$ such that $(Y, Y^*)$ is a dual pair of spaces.

\begin{definition}
The program $(\mathrm{D_{LP}})$~is {\it consistent\/} if there exists a linear functional $y \in Y^*$ such that $\bigl(A' (y)\bigr)(x) \geq c(x)$ for all $x \in P$. The {\it value\/} of a consistent program~$(\mathrm{D_{LP}})$ is $\inf \bigl\{\, y(b) : A'(y) \geq_P c, \allowbreak\!\; y \in Y^* \,\bigr\}$.
\end{definition}

In the next definition we assume the weak topology on the space~$Y$ with respect to~$Y^*$. To introduce that, we describe all the neighborhoods of a point. A set $U \subseteq Y$ is a {\it weak neighborhood\/} of a point $f_0 \in Y$ if there exist a natural number~$n$ and functionals $y_1, \ldots, \allowbreak y_n \in Y^*$ such that $\bigcap_{j=1}^n \bigl\{\, f \in Y : \bigl|y_j(f) - y_j(f_0)\bigr| < 1 \,\bigr\} \subseteq U$.

\begin{definition}\label{supervalue}
Put $D = \bigl\{\, \bigl(A(x), \, c(x) \bigr) : x \in P \,\bigr\}$. The program $(\mathrm{P_{LP}})$~is {\it superconsistent\/} if there exists a $z \in \mathbb{R}$ such that $(b,z) \in \,\overline{\!D}$, where $\overline{\!D}$~is the closure of~$D$. The {\it supervalue\/} of a superconsistent program~$(\mathrm{P_{LP}})$ is $\sup \bigl\{\, z : (b,z) \in \,\overline{\!D} \,\bigr\}$.
\end{definition}

We recall that a pair $(I,\leq)$ is {\it right-directed\/} if $I$~is a preordered set and for any $i,j \in I$ there exists a $k \in I$ such that $i \leq k$ and $j \leq k$. A {\it net\/} (generalized sequence) of~$X$ is $(x_i)_{i\in I}$ where $(I,\leq)$ is a right-directed pair and $x_i \in X$ for all $i \in I$.

Notice that the program $(\mathrm{P_{LP}})$~is superconsistent if there exists a net $(x_i)_{i\in I}$ from~$P$ such that $A(x_i) \buildrel w \over \longrightarrow b$, which means that $A(x_i)$ converges to~$b$ in the weak topology, and $(c(x_i))_{i \in I}$ is bounded. Furthermore, a number $z^\ast$~is the supervalue of a superconsistent program~$(\mathrm{P_{LP}})$ if it is the least upper bound of all numbers $z$~such that there exists a net $(x_i)_{i\in I}$ from~$P$ such that $A(x_i) \buildrel w \over \longrightarrow b$ and $c(x_i) \longrightarrow z$.

\begin{proposition}\label{duality}
Consider the programs in~\eqref{LP}. Program $(\mathrm{P_{LP}})$~is superconsistent and $z^\ast$~is its finite supervalue if and only if program $(\mathrm{D_{LP}})$~is consistent and $z^\ast$~is its finite value.
\end{proposition}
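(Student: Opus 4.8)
The plan is to deduce the equivalence from the separation theorem for a point and a disjoint closed convex set in a locally convex space, applied to the convex cone $D$ of Definition~\ref{supervalue}. Since $P$ is a convex cone and $A$, $c$ are linear, $D=\{(A(x),c(x)):x\in P\}$ is a convex cone in $Y\times\mathbb{R}$, and so is its closure $\overline{D}$, taken in the product of the weak topology $\sigma(Y,Y^*)$ on $Y$ and the usual topology on $\mathbb{R}$. As $(Y,Y^*)$ is a dual pair, this product space is a Hausdorff locally convex space whose continuous linear functionals are precisely the maps $(w,z)\mapsto y(w)+tz$ with $(y,t)\in Y^*\times\mathbb{R}$. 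I would also record weak duality: if $y\in Y^*$ is feasible for $(\mathrm{D_{LP}})$, then $\{(w,z)\in Y\times\mathbb{R}:y(w)-z\geq 0\}$ is closed and contains $D$, hence contains $\overline{D}$, so $(b,z)\in\overline{D}$ implies $z\leq y(b)$; taking a supremum over $z$ and an infimum over $y$ shows that whenever $(\mathrm{P_{LP}})$ is superconsistent and $(\mathrm{D_{LP}})$ consistent, the supervalue is at most the value.

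For the implication from a consistent dual with finite value $z^\ast$ to a superconsistent primal with supervalue $z^\ast$, it suffices to prove $(b,z^\ast)\in\overline{D}$, since then $(\mathrm{P_{LP}})$ is superconsistent with supervalue $\geq z^\ast$ and weak duality gives the reverse inequality. Because $\overline{D}$ is a closed convex cone, this membership follows once I check that every $(y,t)\in Y^*\times\mathbb{R}$ with $y(A(x))+t\,c(x)\leq 0$ for all $x\in P$ satisfies $y(b)+t\,z^\ast\leq 0$; otherwise such a functional strictly separates $(b,z^\ast)$ from $\overline{D}$ with separating constant $0$. I split on the sign of $t$. If $t>0$, rescaling to $t=1$ makes $-y$ dual-feasible, so $z^\ast\leq(-y)(b)=-y(b)$, as required. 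If $t=0$ and $y(b)>0$, then for any dual-feasible $\bar{y}$ the functionals $\bar{y}-\mu y$ ($\mu\geq 0$) are dual-feasible with value $\bar{y}(b)-\mu\,y(b)\to-\infty$, contradicting finiteness of the value; hence $y(b)\leq 0$. If $t<0$, rescale to $t=-1$, so $y(A(x))\leq c(x)$ on $P$; if $y(b)>z^\ast$, choose a dual-feasible $\bar{y}$ with $\bar{y}(b)<y(b)$ (possible since the value is $z^\ast<y(b)$), note $(\bar{y}-y)(A(x))\geq 0$ on $P$, and conclude that $\bar{y}+\mu(\bar{y}-y)$ ($\mu\geq 0$) are dual-feasible with value $\bar{y}(b)+\mu(\bar{y}(b)-y(b))\to-\infty$, again a contradiction; hence $y(b)\leq z^\ast$. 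So $(b,z^\ast)\in\overline{D}$.

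For the converse, assume $(\mathrm{P_{LP}})$ is superconsistent with finite supervalue $z^\ast$. The set $\{z:(b,z)\in\overline{D}\}$ is nonempty, closed and bounded above, hence contains $z^\ast$, so $(b,z^\ast)\in\overline{D}$ while $(b,z^\ast+\varepsilon)\notin\overline{D}$ for each $\varepsilon>0$. Separating $(b,z^\ast+\varepsilon)$ from $\overline{D}$ yields $(y,t)\in Y^*\times\mathbb{R}$ with $y(A(x))+t\,c(x)\leq 0$ on $P$ and $y(b)+t(z^\ast+\varepsilon)>0$; evaluating the first inequality at $(b,z^\ast)\in\overline{D}$ gives $y(b)+t\,z^\ast\leq 0$, and subtracting forces $t>0$. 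Rescaling to $t=1$, the functional $-y$ is dual-feasible with $z^\ast\leq(-y)(b)<z^\ast+\varepsilon$. Thus $(\mathrm{D_{LP}})$ is consistent with value $<z^\ast+\varepsilon$ for every $\varepsilon>0$, hence at most $z^\ast$; since weak duality gives value $\geq$ supervalue $=z^\ast$, the value equals $z^\ast$.

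I expect the setup — the cone structure, the identification of the dual of $Y\times\mathbb{R}$, and the passage from separation to the supporting-functional inequalities — to be routine. The main obstacle is the sign analysis of the separating coefficient $t$ in the first implication: when $t\leq 0$ one must turn a ``wrong-sign'' separator into a ray of dual-feasible functionals along which the objective tends to $-\infty$, so as to contradict finiteness of the value, and the genuinely non-obvious point is to pick, in the case $t<0$, a dual-feasible $\bar{y}$ near enough to the value that $\bar{y}(b)<y(b)$.
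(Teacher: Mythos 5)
Your argument is correct, and it is essentially the argument the paper itself relies on: the paper gives no proof of Proposition~\ref{duality}, deferring to Theorem~3.3 of Anderson and Nash (1987), which is exactly the separating-hyperplane/bipolar result for the closed convex cone $\,\overline{\!D}$ in $(Y,\sigma(Y,Y^*))\times\mathbb{R}$ that you reconstruct, including the weak-duality inequality and the sign analysis of the separating coefficient $t$ (your ray constructions for $t\le 0$ correctly turn a wrong-sign separator into dual-feasible points driving $y(b)$ to $-\infty$). The only implicit hypothesis worth noting is that $P$ (hence $D$) is nonempty, i.e.\ $0\in P$, which is needed to take the separating constant equal to $0$; this holds in the paper's setting, where $P=\mathbb{R}_*^{(\mathcal{A}')}$.
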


Proposition~\ref{duality} is a restatement of Theorem~3.3, p.~41, in
\citet{AndersonNash1987}. Notice that we differ from
\citet{AndersonNash1987} in the point that \citeauthor{AndersonNash1987}
use slightly different notions of superconsistency and supervalue.
However, they also remark that their notions and the ones we use here
are equivalent (p.~41 above Theorem~3.3). This is why we omit the proof
of Proposition~\ref{duality} here.

\section{The $\kappa$-structures}\label{sec2}

Throughout this section $\kappa$ is an infinite cardinal number. Let $N$~be a non-empty set and let $\mathcal{A} \subseteq
\mathcal{P}(N)$ be a field of sets; that is, if $S_1, \ldots,
\allowbreak S_n \in \mathcal{A}$, then $\bigcup_{j=1}^n S_j \in
\mathcal{A}$, and $N \in \mathcal{A}$ with $N \setminus S \in
\mathcal{A}$ for any $S \in \mathcal{A}$. The pair $(N,\mathcal{A})$ is called {\it chargeable space}.

Given a chargeable space $(N,\mathcal{A})$, let $\ba (\mathcal{A})$ and
$\ca (\mathcal{A})$ denote, respectively, the set of bounded additive
set functions and the set of bounded $\sigma$-additive set functions
$\mu\colon \mathcal{A} \to \mathbb{R}$.

Let $(S_i)_{i\in I}$ be a net of sets of~$\mathcal{A}$; a net $(S_i)_{i\in I}$ is a {\it $\kappa$-net\/} if $\#I \leq \kappa$, where $\#I$~is the cardinality of the set~$I$. In addition, the net $(S_i)_{i\in I}$ is {\it monotone decreasing\/} or {\it monotone increasing\/} if $i \leq j$ implies $S_i \supseteq S_j$ or $S_i \subseteq S_j$, respectively, for any $i, j \in I$.

Let $\mu \colon \mathcal{A} \to \mathbb{R}$ be a set function. We say that $\mu$ is {\it upper $\kappa$-continuous\/} or {\it lower $\kappa$-continuous\/} at $S \in \mathcal{A}$ if for any monotone decreasing or increasing $\kappa$-net $(S_i)_{i\in I}$ from $\mathcal{A}$ with $\bigcap_{i\in I} S_i = S$ or $\bigcup_{i\in I} S_i = S$, respectively, it holds that $\lim_{i \in I} \mu(S_i) = \mu(S)$. The set function $\mu$~is {\it $\kappa$-continuous\/} if it is both upper and lower $\kappa$-continuous at every set $S \in \mathcal{A}$.

Next we define the notion of $\kappa$-additivity. Our definition is similar to the one by \citet{SchervishSeidenfeldKadane2017}.

\begin{definition}\label{ka}
A set function $\mu \colon \mathcal{A} \to \mathbb{R}$ is {\it $\kappa$-additive\/} if it is additive and $\kappa$-continuous. Let $\ba^\kappa(\mathcal{A})$ denote the set of $\kappa$-additive set functions over $\mathcal{A}$.
\end{definition}


Note that $\ba^\kappa(\mathcal{A})$ is a linear subspace of $\ba(\mathcal{A})$. Futhermore, the following proposition is easy to see.

\begin{proposition}
If the set function\/ $\mu\colon \mathcal{A} \to \mathbb{R}$ is additive, then it is
\begin{itemize}
\item
upper\/ $\kappa$-continuous if and only if it is lower\/ $\kappa$-continuous;
\item
$\kappa$-continuous if and only if it is lower $\kappa$-continuos at $\emptyset$;
\item
$\aleph_0$-continuous if and only if it is\/ $\sigma$-additive.
\end{itemize}
\end{proposition}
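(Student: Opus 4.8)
The plan is to prove the three equivalences in turn, using throughout that $\mathcal{A}$ is a field — hence closed under complements and relative complements — and that additivity of $\mu$ gives $\mu(\emptyset)=0$, $\mu(N\setminus S)=\mu(N)-\mu(S)$, and $\mu(T)=\mu(S)+\mu(T\setminus S)$ whenever $S\subseteq T$ in $\mathcal{A}$.

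\emph{First equivalence.} I would exploit the complementation involution $S\mapsto N\setminus S$ of $\mathcal{A}$. It induces a bijection between monotone decreasing $\kappa$-nets $(S_i)_{i\in I}$ with $\bigcap_{i\in I}S_i=S$ and monotone increasing $\kappa$-nets $(N\setminus S_i)_{i\in I}$ with $\bigcup_{i\in I}(N\setminus S_i)=N\setminus S$; since $\mu(N\setminus S_i)=\mu(N)-\mu(S_i)$ and $\mu(N\setminus S)=\mu(N)-\mu(S)$, we have $\mu(S_i)\to\mu(S)$ iff $\mu(N\setminus S_i)\to\mu(N\setminus S)$. Hence $\mu$ is upper $\kappa$-continuous at $S$ precisely when it is lower $\kappa$-continuous at $N\setminus S$, and letting $S$ range over $\mathcal{A}$ identifies the two global notions.

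\emph{Second equivalence.} Only the nontrivial implication needs work, and by the first equivalence it suffices to show that continuity from above at the single set $\emptyset$ (equivalently, by complementation, lower $\kappa$-continuity at $N$) already forces upper $\kappa$-continuity at every $S\in\mathcal{A}$. Given a monotone decreasing $\kappa$-net $(S_i)_{i\in I}$ with $\bigcap_{i\in I}S_i=S$, the shifted net $(S_i\setminus S)_{i\in I}$ still lies in $\mathcal{A}$, is monotone decreasing, is a $\kappa$-net, and has $\bigcap_{i\in I}(S_i\setminus S)=\emptyset$; the hypothesis yields $\mu(S_i\setminus S)\to 0$, and since $S\subseteq S_i$ we conclude $\mu(S_i)=\mu(S)+\mu(S_i\setminus S)\to\mu(S)$.

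\emph{Third equivalence.} Ordinary sequences indexed by $\mathbb{N}$ are $\aleph_0$-nets, so if $\mu$ is $\aleph_0$-continuous and $(A_n)_{n\ge1}$ are pairwise disjoint members of $\mathcal{A}$ with $A:=\bigcup_n A_n\in\mathcal{A}$, then $S_k:=A\setminus\bigcup_{n\le k}A_n$ is a decreasing sequence in $\mathcal{A}$ with empty intersection, whence $\mu(S_k)\to0$; together with $\mu(A)=\sum_{n\le k}\mu(A_n)+\mu(S_k)$ this gives $\mu(A)=\sum_n\mu(A_n)$, i.e.\ $\sigma$-additivity. For the converse, assume $\mu$ is $\sigma$-additive; by the first equivalence it is enough to establish upper $\aleph_0$-continuity. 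This is the step I expect to be the main obstacle, because $\mu$ need not be monotone, so the real-valued net $(\mu(S_i))_{i\in I}$ offers no monotonicity to exploit directly. I would first observe that every countable right-directed $I$ has a cofinal increasing sequence $(i_k)$ (enumerate $I$ and pick $i_k$ above both $i_{k-1}$ and the $k$-th listed element). Given a monotone decreasing $\aleph_0$-net $(S_i)_{i\in I}$ with $\bigcap_i S_i=S$, cofinality gives $\bigcap_k S_{i_k}=S$, and the usual sequential argument — decompose $S_{i_1}\setminus S$ into the pairwise disjoint pieces $S_{i_k}\setminus S_{i_{k+1}}$ and apply $\sigma$-additivity — gives $\mu(S_{i_k})\to\mu(S)$. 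To upgrade this to $\lim_{i\in I}\mu(S_i)=\mu(S)$, I would argue by contradiction: if it failed, then for some $\varepsilon>0$ the cofinal sequence could be chosen so as to also satisfy $|\mu(S_{i_k})-\mu(S)|\ge\varepsilon$ for all $k$, contradicting what was just shown. Hence $\mu$ is upper $\aleph_0$-continuous, and $\aleph_0$-continuous by the first equivalence.
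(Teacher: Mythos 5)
Your proof is correct. The paper states this proposition without proof (it is dismissed as ``easy to see''), so there is no official argument to compare against; your complementation involution $S\mapsto N\setminus S$ for the first bullet and the difference-set reduction $S_i\mapsto S_i\setminus S$ for the second are the standard ones, and you correctly isolate the only genuinely delicate point in the third bullet: an $\aleph_0$-net is indexed by an arbitrary countable right-directed set, not by $\mathbb{N}$, so the classical telescoping argument for $\sigma$-additive set functions only gives convergence along a sequence. Your fix --- extract an increasing cofinal sequence, run the telescoping/disjointification argument along it, and, for the upgrade to the whole net, note that if convergence failed one could choose the increasing cofinal sequence inside the set of indices where $|\mu(S_i)-\mu(S)|\geq\varepsilon$, contradicting the sequential result --- is sound (the construction interleaves dominating the enumeration of $I$, the previous term, and an $\varepsilon$-deviating witness), and it uses no boundedness or monotonicity of $\mu$, as required.

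One remark on the second bullet: read verbatim, ``lower $\kappa$-continuous at $\emptyset$'' is vacuous, since a monotone increasing net whose union is $\emptyset$ is constantly $\emptyset$, and with that literal reading the stated equivalence would be false (any additive, non-$\sigma$-additive charge is a counterexample for $\kappa=\aleph_0$). The condition the statement must intend is continuity along decreasing nets shrinking to $\emptyset$ --- equivalently, by the pointwise form of your first bullet, lower $\kappa$-continuity at $N$ --- and that is exactly what you prove, so your silent correction is the right reading of the statement.
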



\begin{example}
The Lebesgue measure on $B([0,1])$, the Borel $\sigma$-field of $[0,1]$,
is not $\kappa$-additive for any $\kappa \geq \mathfrak{c}$, where
$\mathfrak{c}$~denotes the cardinality of the real numbers; but it is
$\kappa$-additive for $\kappa = \aleph_0$.
\end{example}

If $\kappa$ is not countable and the field of sets on which the $\kappa$-additive set function is defined is rich enough, then one may ask whether there are enough or just few $\kappa$-additive set functions. Without going into the details we remark that this problem is related to the notion of measurable cardinal \citep{Ulam1930}.


The next example shows that there are many $\kappa$-additive set functions in the space $\ba^\kappa(\mathcal{A})$
even in the case when the field $\mathcal{A}$~is large; that is, the theory is not trivial nor vacuous.

\begin{example}\label{pl0}
Let $X$~be an arbitrary set such that $\#X = \kappa \geq \aleph_0$. Consider $\mathcal{P}(X)$, the power set of~$X$. It is clear that the Dirac measures on $\mathcal{P}(X)$ are $\kappa$-additive. Let
$$
\eqarraywithdef {\hfil $\displaystyle #$& #\hfil \cr
{\it \Delta} = \biggl\{\, \sum_{n=1}^\infty \alpha_n \delta_n : {}
	& $(\alpha_n)_{n=1}^\infty \in \ell^1,$\cr
\noalign {\kern-5\jot}
	& $\hphantom{(}
	   \delta_n$ being Dirac measures on~$\mathcal{P}(X)$
	   for $n = 1$,~$2$, $3$,~\dots $\,\biggr\} \,.$}
$$
It is clear that each $\mu \in {\it \Delta}$ is a $\kappa$-additive set function on $\mathcal{P}(X)$. Notice that $\#{\it \Delta} \geq \#\ca\bigl(\mathbb{N},\mathcal{P}(\mathbb{N})\bigr)$; that is, even in the ``worst'' case, when there does not exist a non-trivial $\{0,1\}$-valued $\kappa$-additive set function on $\mathcal{P}(X)$, which means the cardinal $\kappa$~is not measurable \citep{Ulam1930}, the collection~${\it \Delta}$ of the trivial $\kappa$-additive set functions on $\mathcal{P}(X)$ is at least as large as the collection of the $\sigma$-additive ones on $\mathcal{P}(\mathbb{N})$.
In other words, even in the ``worst'' case, the problem of the non-emptiness of the $\kcore$ is at least as complex as the non-emptiness of the $\sigma$-core with player set~$\mathbb{N}$ and all coalitions feasible, the case considered by \citet{Kannai1969,Kannai1992}.
\end{example}

Given a set system~$\mathcal{A}$, the space $\mathbb{R}^{(\mathcal{A})}$~consists of all functions $\lambda \colon \mathcal{A} \to \mathbb{R}$ with a finite support; that is,
\begin{equation*}
\mathbb{R}^{(\mathcal{A})} = \bigl\{\, \lambda \in \mathbb{R}^\mathcal{A} : \#\{\, S \in \mathcal{A} : \lambda_S \neq 0 \,\} < \infty \,\bigr\} \,.
\end{equation*}
Denoting $\lambda(S)$ and the characteristic function of a set $S \in
\mathcal{A}$ by~$\lambda_S$ and~$\chi_S$, respectively, let
$\Lambda(\mathcal{A}) = \{\, \lambda_{S_1} \chi_{S_1} + \cdots +
\lambda_{S_n} \chi_{S_n} : n \in \mathbb{N}, \allowbreak\!\;
\lambda_{S_1}, \ldots, \allowbreak \lambda_{S_n} \!\in \mathbb{R},
\allowbreak\!\; S_1, \ldots, \allowbreak S_n \in \mathcal{A} \,\}$ be
the space of all simple functions on $(N,\mathcal{A})$; that
is,
\begin{equation*}
\Lambda(\mathcal{A}) =
\biggl \{\,
\sum_{S\in\mathcal{A}} \lambda_S \chi_S : \lambda \in \mathbb{R}^{(\mathcal{A})}
\,\biggr \} \,.
\end{equation*}
%

We introduce a norm on~$\Lambda(\mathcal{A})$ as follows. For a simple function $f = \lambda_{S_1} \chi_{S_1} + \cdots + \lambda_{S_n} \chi_{S_n} \!\in \Lambda(\mathcal{A})$
let
\begin{equation*}
\mathopen\|f\mathclose\| = \sup_{x \in N} \bigl|f(x)\bigr| \,.
\end{equation*}
%

Then the topological dual $(\Lambda (\mathcal{A}))^\star$ of the vector space $\Lambda(\mathcal{A})$, which is the space of all continuous linear functionals on $\Lambda(\mathcal{A})$, is isometrically isomorphic to $\ba(\mathcal{A})$, the space of all bounded additive set functions on~$\mathcal{A}$ (see e.g.\ \citet{DunfordSchwartz1958}, Theorem IV.5.1, p.~258). For simplicity, we shall identify the space $(\Lambda(\mathcal{A}))^\star$ with $\ba(\mathcal{A})$. Indeed, a set function $\mu \in \ba (\mathcal{A})$ induces a continuous linear functional $\mu' \in (\Lambda(\mathcal{A}))^\star$ on $\Lambda(\mathcal{A})$ as follows:
\begin{equation}\label{funct}
\mu'(f) = \lambda_{S_1} \mu(S_1) + \cdots + \lambda_{S_n} \mu(S_n)
\end{equation}
for any $f = \lambda_{S_1} \chi_{S_1} + \cdots + \lambda_{S_n} \chi_{S_n} \!\in \Lambda(\mathcal{A})$.



\begin{lemma}\label{dual-pair}
It holds that $\bigl(\Lambda(\mathcal{A}), \ba^\kappa(\mathcal{A})\bigr)$ is a dual pair of spaces.
\end{lemma}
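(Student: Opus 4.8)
The plan is to verify the separation property directly: I will show that for every non-zero simple function $f \in \Lambda(\mathcal{A})$ there is a $\kappa$-additive set function $\mu$ whose induced functional $\mu'$ (via~\eqref{funct}) satisfies $\mu'(f) \neq 0$. Note first that this is the right statement to prove, since $\ba^\kappa(\mathcal{A})$ is a linear subspace of $\ba(\mathcal{A}) = (\Lambda(\mathcal{A}))^\star$, which in turn sits inside the algebraic dual $(\Lambda(\mathcal{A}))'$; thus $\bigl(\Lambda(\mathcal{A}), \ba^\kappa(\mathcal{A})\bigr)$ is a dual pair precisely when $\ba^\kappa(\mathcal{A})$ separates the points of $\Lambda(\mathcal{A})$. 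The natural separating family is that of the Dirac measures: since $f$ is non-zero, choose $x \in N$ with $f(x) \neq 0$ and let $\delta_x \colon \mathcal{A} \to \mathbb{R}$ be given by $\delta_x(S) = \chi_S(x)$, i.e.\ $\delta_x(S) = 1$ if $x \in S$ and $\delta_x(S) = 0$ otherwise.

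Next I would check that $\delta_x \in \ba^\kappa(\mathcal{A})$. Boundedness and additivity are immediate. For $\kappa$-continuity (Definition~\ref{ka}) I would verify upper and lower $\kappa$-continuity at every $S \in \mathcal{A}$ directly — an argument that does not even use the cardinality bound on the net. If $(S_i)_{i \in I}$ is a monotone decreasing net in $\mathcal{A}$ with $\bigcap_{i \in I} S_i = S$, then: when $x \in S$ we have $x \in S_i$ for all $i$, so $\delta_x(S_i) = 1 = \delta_x(S)$; when $x \notin S$ there is some $i_0 \in I$ with $x \notin S_{i_0}$, hence $x \notin S_i$ for all $i \geq i_0$ by monotonicity, so $\delta_x(S_i) = 0 = \delta_x(S)$ eventually. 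In either case $\lim_{i \in I} \delta_x(S_i) = \delta_x(S)$, and the monotone increasing case is symmetric. Hence $\delta_x$ is $\kappa$-continuous, so $\delta_x \in \ba^\kappa(\mathcal{A})$.

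Finally I would compute $\delta_x'(f)$: writing $f = \lambda_{S_1}\chi_{S_1} + \cdots + \lambda_{S_n}\chi_{S_n}$, formula~\eqref{funct} gives $\delta_x'(f) = \sum_{j=1}^n \lambda_{S_j}\delta_x(S_j) = \sum_{j=1}^n \lambda_{S_j}\chi_{S_j}(x) = f(x) \neq 0$, which is the desired separation. I do not expect any real obstacle here; the only point requiring a little care is that $\kappa$-continuity of $\delta_x$ must be checked against (monotone) nets rather than sequences, and the argument above handles all nets uniformly.
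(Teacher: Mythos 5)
Your proposal is correct and follows essentially the same route as the paper: the paper's proof also picks $x \in N$ with $f(x) \neq 0$ and uses the Dirac measure $\delta_x \in \ba^\kappa(\mathcal{A})$ with $\delta'_x(f) = f(x) \neq 0$. The only difference is that you spell out the $\kappa$-continuity of $\delta_x$ along monotone nets, which the paper treats as immediate.
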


\begin{proof}
Let $f \in \Lambda(\mathcal{A})$ be non-zero, whence there is an $x \in N$ such that $f(x) \neq 0$. Then $\delta_x$, the Dirac measure concentrated at point~$x$ on~$\mathcal{A}$, is a $\kappa$-additive set function, and $\delta'_x(f) = f(x) \neq 0$.
\end{proof}



\section{The $\kappa$-core and the $\kappa$-balancedness of TU games}\label{main}

Let $\kappa$~be an arbitrary infinite cardinal number as in the previous section. First, we introduce the notion of TU games. Let $N$~be a non-empty set of players, let $\mathcal{A}' \subseteq \mathcal{P}(N)$ be a collection of sets such that $\emptyset,N \in \mathcal{A}'$, and let $\mathcal{A}$~denote the field hull of~$\mathcal{A}'$; that is, the smallest field of sets that contains~$\mathcal{A}'$. Then a {\it TU game\/} (henceforth a {\it game\/}) on~$\mathcal{A}'$ is a set function $v\colon \mathcal{A}' \to \mathbb{R}$ such that $v(\emptyset) = 0$. We denote the class of games on~$\mathcal{A}'$ by~$\mathcal{G}^{\mathcal{A}'}$. If $\mathcal{A}' = \mathcal{A}$, then $v \in \mathcal{G}^{\mathcal{A}'}$ is a game {\it without restricted cooperation}. Otherwise, if $\mathcal{A}'$~is not a field, $v \in \mathcal{G}^{\mathcal{A}'}$ is a game {\it with restricted cooperation}.


In the following subsections we introduce the three notions of core and the three notions of balancedness that we consider in this paper.

\subsection {The core of a TU game}

First, we introduce the notion of additive core of a game, which was considered by \citet{Schmeidler1967}, \citet{Kannai1969,Kannai1992}, \citet{Pinter2011d}, and \citet{BartlPinter2022a}. 

\begin{definition}
For a game $v \in \mathcal{G}^{\mathcal{A}'}$ its {\it additive core\/} (henceforth $\bacore$) is defined as follows:
\begin{equation*}
\bacore(v) = \bigl\{\, \mu \in \ba(\mathcal{A}) : \mu(N) = v(N) \text{ and } \mu(S) \geq v(S) \text{ for all } S \in \mathcal{A}' \,\bigr\} \,.
\end{equation*}
\end{definition}

We shall also need the notion of $\sigma$-additive core of a game.

\begin{definition}\label{cacore}
For a game $v \in \mathcal{G}^{\mathcal{A}'}$ its {\it $\sigma$-additive core\/} (henceforth $\cacore$) is defined as follows:
\begin{equation*}
\cacore(v) = \bigl\{\, \mu \in \ca(\mathcal{A}) : \mu(N) = v(N) \text{ and } \mu(S) \geq v(S) \text{ for all } S \in \mathcal{A}' \,\bigr\} \,.
\end{equation*}
\end{definition}

In general, for an infinite cardinal number~$\kappa$ we introduce the notion of $\kcore$ of a game.

\begin{definition}\label{core}
For a game $v \in \mathcal{G}^{\mathcal{A}'}$ its {\it $\kappa$-core\/} is defined as follows:
\begin{equation*}
\kcore(v) = \bigl\{\, \mu \in \ba^\kappa(\mathcal{A}) : \mu(N) = v(N) \text{ and } \mu(S) \geq v(S) \text{ for all } S \in \mathcal{A}' \,\bigr\} \,.
\end{equation*}
\end{definition}

In words, the $\bacore$, the $\cacore$, and the $\kcore$ consists of bounded additive, bounded $\sigma$-additive, and bounded $\kappa$-additive, respectively, set functions defined on the field hull~$\mathcal{A}$ of the feasible coalitions~$\mathcal{A}'$ that meet the  conditions of efficiency ($\mu(N) = v(N)$) and coalitional rationality ($\mu(S) \geq v(S)$ for all $S \in \mathcal{A}'$). Observe that the $\cacore$ is a special case of the $\kcore$ when $\kappa = \aleph_0$.

Notice that in the finite case all the three notions of $\bacore$, $\cacore$, and $\kcore$ are equivalent with the notion of (ordinary) core.

\subsection{Balancedness of a TU game}


In the case of infinite games without restricted cooperation with additive core \citet{Schmeidler1967} defined the notion of balancedness. Here, we generalize his notion to the restricted cooperation case, and call it Schmeidler balancedness.

\begin{definition}\label{kiegyensulyozott3}
We say that a game $v \in \mathcal{G}^{\mathcal{A}'}$ is {\it Schmeidler balanced\/} if

\begin{equation}\label{tiszta}
\sup
\biggl\{
	\sum_{S\in\mathcal{A}'} \lambda_S v(S) :
	\sum_{S\in\mathcal{A}'} \lambda_S \chi_S = \chi_N, \,
	\lambda \in \mathbb{R}_+^{(\mathcal{A}')}
\biggr\} \leq v(N) \,.
\end{equation}

\end{definition}


Notice that for finite games the notions of Schmeidler balancedness and (ordinary) balancedness \citep{Bondareva1963,Shapley1967,Faigle1989} coincide, hence Schmeidler balancedness is an extension of (ordinary) balancedness.

Recall that $Y^* = \ba^\kappa(\mathcal{A})$ is a linear subspace of $Y^\star = \ba(\mathcal{A})$, which can be identified with the topological dual of the normed linear space $Y = \Lambda(\mathcal{A})$. In the next two definitions, where we introduce two new notions of balancedness, we consider the weak topology
on $Y = \Lambda(\mathcal{A})$ with respect to $Y^* = \ba^\kappa(\mathcal{A})$ (see Lemma~\ref{dual-pair}).

First, for a game $v \in \mathcal{G}^{\mathcal{A}'}$ consider the convex cone
\begin{equation}\label{cone-K_v^+}
K_v^+ = \Biggl\{ \biggl(
	\sum_{S\in\mathcal{A}'} \lambda_S \chi_S,
	\sum_{S\in\mathcal{A}'} \lambda_S v(S)
\biggr) : \lambda \in \mathbb{R}_+^{(\mathcal{A}')} \Biggr\} \,.
\end{equation}
%


\begin{definition}\label{kiegyensulyozott2}
We say that a game $v \in \mathcal{G}^{\mathcal{A}'}$ is {\it Schmeidler $\kappa$-balanced\/} if
$$
z \leq v(N)
$$
for all $z \in \mathbb{R}$ such that $(\chi_N,z) \in \,\overline{\!K_v^+}$, where $\overline{\!K_v^+}$ is the closure of~$K_v^+$.
\end{definition}


Observe that Schmeidler $\kappa$-balancedness implies Schmeidler balancedness, which implies (ordinary) balancedness.

Lastly, for a game $v \in \mathcal{G}^{\mathcal{A}'}$ let
$$
\mathbb{R}_\ast^{(\mathcal{A}')} = \bigl\{\, \lambda \in \mathbb{R}^{(\mathcal{A}')} : \lambda_S \geq 0 \text{ for all } S \in \mathcal{A}' \setminus \{N\} \,\bigr\}
$$
and consider the convex cone
\begin{equation}\label{cone-K_v}
K_v = \Biggl\{ \biggl(
	\sum_{S\in\mathcal{A}'} \lambda_S \chi_S,
	\sum_{S\in\mathcal{A}'} \lambda_S v(S)
\biggr) : \lambda \in \mathbb{R}_\ast^{(\mathcal{A}')} \Biggr\} \,.
\end{equation}
%

\begin{definition}\label{kiegyensulyozott}
A game $v \in \mathcal{G}^{\mathcal{A}'}$ is {\it $\kappa$-balanced\/} if
\begin{equation*}
z \leq v(N)
\end{equation*}
for all $z \in \mathbb{R}$ such that $(\chi_N,z) \in \,\overline{\!K_v}$, where $\overline{\!K_v}$ is the closure of~$K_v$.
\end{definition}

\begin{remark}\label{rem11}
The notion of $\kappa$-balancedness and Schmeidler $\kappa$-balancedness is very closely related to the notion of supervalue introduced in Definition~\ref{supervalue}. The cone $K_v$~or~$K_v^+$ is precisely the set~$D$ if $A(\lambda) = \sum_{S\in\mathcal{A}'} \lambda_S \chi_S$ and $c(\lambda) = \sum_{S\in\mathcal{A}'} \lambda_S v(S)$ with $P = \mathbb{R}_+^{(\mathcal{A}')}$ or $P = \mathbb{R}_*^{(\mathcal{A}')}$, respectively, in Definition~\ref{supervalue}. Then the game is $\kappa$-balanced or Schmeidler $\kappa$-balanced, respectively, if and only if the supervalue of the related primal problem~$(\mathrm{P_{LP}})$ is not greater than~$v(N)$.
\end{remark}

Notice that the notion of $\kappa$-balancedness is a ``double" extension of Schmeidler balancedness. First, we do not take the balancing weight system alone, but we take nets of balancing weight systems. Second, we let the weight of the grand coalition be sign unrestricted. It is worth noticing that the notion of $\kappa$-balancedness applies its full strength when in a net of balancing weight systems the net of the weights of the grand coalition is not bounded below (see Lemma~\ref{egyenloseg} below).

The insight why we need the ``double" extension is the following: As we shall see, the proof of our generalized Bondareva-Shapley theorem is based on the strong duality theorem for infinite LPs (Proposition~\ref{duality}), which is based on separation of a closed convex set from a point (not in the set). Therefore, we need to take the weak closure of a convex set in our proof. This is why we use the nets of balancing set systems.

Regarding that the weight of the grand coalition is sign unrestricted, notice that the linear combinations of Dirac measures are $\kappa$-additve for any~$\kappa$, moreover, it is easy to see that the linear space spanned by the Dirac measures is weak* dense in the set of bounded additive set functions. Hence, by the results of \citet{Schmeidler1967}, \citet{Kannai1969,Kannai1992}, and \citet{BartlPinter2022a},
we have a necessary and sufficient condition for the non-emptiness of the ``approximate" $\kcore$ for any~$\kappa$ for free: Schmeidler balancedness. However, we analyze the non-emptiness of the (exact) $\kcore$ for any~$\kappa$. Therefore,
we set the appropriate variable (the weight of the grand coalition) in the primal problem be sign unrestricted, by which we get equality in the related constraint in the dual problem (the total mass of an allocation must exactly be the value of the grand coalition), hence we will have a necessary and sufficient condition for the non-emptiness of the $\kcore$ for any~$\kappa$: $\kappa$-balancedness.

Between Schmeidler balancedness and $\kappa$-balancedness, there lies the ``double" extension of the former one, Schmeidler $\kappa$-balancedness, where only the first step is taken: we take nets of balancing weight systems. Even though we shall see later that Schmeidler $\kappa$-balancedness does not lead to new characterization results, it provides deeper understanding of the problem.




Since Schmeidler $\kappa$-balancedness is the same as $\kappa$-balancedness except that~$K_v$ in Definition~\ref{kiegyensulyozott2} is replaced by~$K_v^+$ in Definition~\ref{kiegyensulyozott}, by $K_v^+ \subseteq K_v$, it is clear that $\kappa$-balancedness implies Schmeidler $\kappa$-balancedness. Furthermore, Schmeidler $\kappa$-balancedness and $\kappa$-balancedness are related by the following lemma.

\begin{lemma}\label{egyenloseg}
For a game\/ $v \in \mathcal{G}^{\mathcal{A}'}$ it holds
$$
\sup_{\substack{
	(\lambda^i)_{i\in I} \subseteq \mathbb{R}_+^{(\mathcal{A}')} \\
	A(\lambda^i) \buildrel w \over \longrightarrow \chi_N \\
	c(\lambda^i) \longrightarrow z
}} z \,\, \leq \, v(N)
\qquad\quad
\text{if and only if}
\quad\qquad
\sup_{\substack{
	(\lambda^j)_{j\in J} \subseteq \mathbb{R}_*^{(\mathcal{A}')} \\
	A(\lambda^j) \buildrel w \over \longrightarrow \chi_N \\
	c(\lambda^j) \longrightarrow z \\
	\hidewidth
	\liminf \lambda^j_N > -\infty
	\hidewidth
}} z \,\, \leq \, v(N) \,.
$$
where\/ $A (\lambda) = \sum_{S \in \mathcal{A}'} \lambda_S \chi_S$ and\/ $c (\lambda) = \sum_{S \in \mathcal{A}'} \lambda_S v(S)$ for any\/ $\lambda \in \mathbb{R}_+^{\mathcal{A}'}$.
%
\end{lemma}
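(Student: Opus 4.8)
The plan is to prove the equivalence by a two-way implication, exploiting the fact that the left-hand supremum is taken over a subset of the nets appearing on the right-hand side (since $\mathbb{R}_+^{(\mathcal{A}')} \subseteq \mathbb{R}_*^{(\mathcal{A}')}$, and any net from the former with $c(\lambda^i)\to z$ automatically has $\lambda^i_N \geq 0$, so $\liminf \lambda^i_N \geq 0 > -\infty$). Hence the direction ``right $\Rightarrow$ left'' is immediate: if the right-hand supremum is $\leq v(N)$, then a fortiori so is the smaller left-hand supremum.

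The substance is the converse, ``left $\Rightarrow$ right''. Assume the left-hand bound holds, and let $(\lambda^j)_{j\in J} \subseteq \mathbb{R}_*^{(\mathcal{A}')}$ be a net with $A(\lambda^j) \xrightarrow{w} \chi_N$, $c(\lambda^j) \to z$, and $\liminf_j \lambda^j_N = m > -\infty$. I want to produce from it a net $(\tilde\lambda^i)_i \subseteq \mathbb{R}_+^{(\mathcal{A}')}$ with $A(\tilde\lambda^i) \xrightarrow{w} \chi_N$ and $c(\tilde\lambda^i) \to z$, so that $z \leq v(N)$ follows from the left-hand hypothesis. The idea is to ``repair'' the possibly-negative weight $\lambda^j_N$ of the grand coalition. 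Passing to a subnet, I may assume $\lambda^j_N \to c$ for some $c \in [m,\infty]$; split into the case $c < \infty$ and the case $c = +\infty$. Fix a large constant $M \geq \max(0,-m)$, so that $\lambda^j_N + M \geq 0$ eventually. Replace $\lambda^j$ by $\mu^j$, which agrees with $\lambda^j$ on every $S \neq N$ but has $\mu^j_N = \lambda^j_N + M \geq 0$; then $\mu^j \in \mathbb{R}_+^{(\mathcal{A}')}$, and $A(\mu^j) = A(\lambda^j) + M\chi_N \xrightarrow{w} (1+M)\chi_N$, while $c(\mu^j) = c(\lambda^j) + M v(N) \to z + M v(N)$. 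Now rescale: set $\tilde\lambda^i = \frac{1}{1+M}\mu^j$, which is still in $\mathbb{R}_+^{(\mathcal{A}')}$ (the cone is closed under non-negative scaling) and gives $A(\tilde\lambda^i) \xrightarrow{w} \chi_N$ and $c(\tilde\lambda^i) \to \frac{z + M v(N)}{1+M}$. The left-hand hypothesis then yields $\frac{z + M v(N)}{1+M} \leq v(N)$, i.e.\ $z + M v(N) \leq v(N) + M v(N)$, i.e.\ $z \leq v(N)$, as desired.

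The main obstacle is the case $c = +\infty$, i.e.\ where $\lambda^j_N$ is \emph{bounded below} (that is the standing hypothesis $\liminf \lambda^j_N > -\infty$) but \emph{not bounded above}; the rescaling trick with a fixed $M$ no longer normalizes things because $A(\mu^j)$ would converge to $\chi_N$ only after dividing by the moving quantity $1 + \lambda^j_N$, under which $c(\mu^j)/(1+\lambda^j_N) = (c(\lambda^j) + \text{(shift)}\,v(N))/(1 + \lambda^j_N)$ could drift. Here the key computation is to track, along the net, the quantity $c(\lambda^j) - \lambda^j_N v(N)$ (the ``value contributed by the proper coalitions''): rewriting $\lambda^j = \lambda^j_N e_N + \rho^j$ with $\rho^j \in \mathbb{R}_+^{(\mathcal{A}')}$ supported off $N$, one has $A(\lambda^j) = \lambda^j_N \chi_N + A(\rho^j) \xrightarrow{w} \chi_N$, so $A(\rho^j) \xrightarrow{w} (1-\lambda^j_N)\chi_N$, and $c(\lambda^j) = \lambda^j_N v(N) + c(\rho^j)$. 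Dividing $\rho^j$ by $1 - \lambda^j_N$ when this is positive (which it is, eventually, since $\lambda^j_N \to +\infty$ forces $1 - \lambda^j_N < 0$ — so actually one divides $-\rho^j/(\lambda^j_N - 1)$, which is \emph{not} in the positive cone): this shows the limiting behaviour forces, via the left-hand bound applied to suitable normalized pieces, that the ``off-$N$ part'' cannot produce a value exceeding what efficiency allows, and one concludes $z \leq v(N)$ again — in fact in this unbounded case one typically shows the hypothesis forces $z = v(N)$ exactly (this is the remark in the text that $\kappa$-balancedness ``applies its full strength'' here). I would handle this by taking, for each fixed large $M$, the net $\tilde\lambda^{i} = \frac{1}{1+M}(\rho^j + M\chi_N\text{-weight at }N)$ restricted to the cofinal set of indices $j$ with $\lambda^j_N \leq M$; since $\liminf \lambda^j_N$ is finite, for every $M > \liminf \lambda^j_N$ this index set is cofinal in $J$, so one obtains a legitimate net in $\mathbb{R}_+^{(\mathcal{A}')}$ converging appropriately, and letting $M \to \infty$ after applying the left-hand bound closes the argument. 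Assembling these cases — $c$ finite via the clean rescaling, $c = +\infty$ via the cofinal-truncation-plus-$M$-limit argument — gives the converse and hence the stated equivalence. $\hfill\square$
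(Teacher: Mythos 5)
Your easy direction is fine, and your central device for the substantive direction --- lift the grand-coalition weight by a fixed constant $M$ and rescale by $1/(1+M)$ --- is a perfectly good alternative to the paper's construction (the paper argues by contraposition: it takes $L=\liminf\lambda^j_N$, restricts to a terminal segment when $L>0$, and when $L\le 0$ extracts a subnet with $\lambda^{j_i}_N\to L$, sets the $N$-weight to zero and divides by $1-L$). In fact your fixed-$M$ trick needs no subnet and no case split at all: the hypothesis $A(\lambda^j)\buildrel w \over \longrightarrow\chi_N$ is a statement about the whole sum, so $A(\mu^j)=A(\lambda^j)+M\chi_N\buildrel w \over \longrightarrow(1+M)\chi_N$ and $c(\mu^j)=c(\lambda^j)+Mv(N)\to z+Mv(N)$ hold whether or not $\lambda^j_N$ is bounded above; one only needs $M>-\liminf\lambda^j_N$ strictly (with $M=\max(0,-m)$ the shifted weight can stay negative when $\lambda^j_N\uparrow m$) and a restriction to a terminal segment on which $\lambda^j_N+M\ge 0$.

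The genuine gap is your treatment of the case $c=+\infty$. The obstacle you describe there is illusory --- no division by the moving quantity $1+\lambda^j_N$ is ever needed, for the reason just given --- and the substitute construction you offer does not work as written: on the cofinal set $\{\,j:\lambda^j_N\le M\,\}$ you replace $\lambda^j_N$ by the constant $M$, so $A(\tilde\lambda^j)=\frac{1}{1+M}\bigl(A(\lambda^j)+(M-\lambda^j_N)\chi_N\bigr)$ and $c(\tilde\lambda^j)=\frac{1}{1+M}\bigl(c(\lambda^j)+(M-\lambda^j_N)v(N)\bigr)$, and since $\lambda^j_N$ need not converge along that subnet, neither expression need converge; hence you have not exhibited an admissible net for the left-hand supremum, and the concluding ``let $M\to\infty$'' step is unjustified (so is the aside that $z=v(N)$ is forced in this case --- the paper's remark about the condition's ``full strength'' concerns weights unbounded \emph{below}, not above). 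The repair is easy: either drop the case split entirely and run your fixed-$M$ argument for an arbitrary net with $\liminf\lambda^j_N>-\infty$, or, within $\{\,j:\lambda^j_N\le M\,\}$, pass to a further subnet along which $\lambda^j_N$ converges to a finite limit and invoke your first case.
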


\begin{proof}
The ``if'' part is obvious.  Given a net $(\lambda^i)_{i\in I} \subseteq
\mathbb{R}_+^{(\mathcal{A}')}$, consider the same net $(\lambda^j)_{j\in
J} = (\lambda^i)_{i\in I} \subseteq \mathbb{R}_*^{(\mathcal{A}')}$.
Notice that $\liminf \lambda^j_N \geq 0$.

We prove the ``only if'' part indirectly.  Suppose the right-hand side
does not hold.  Then there exists a net $(\lambda^j)_{j \in J} \subseteq
\mathbb{R}_*^{(\mathcal{A}')}$ such that $\liminf \lambda^j_N = L >
-\infty$ and $A(\lambda^j) \buildrel w \over \longrightarrow \chi_N$
with $c(\lambda^j) \longrightarrow z > v(N)$.

If $L > 0$, then there exists a $j_0 \in J$ such that $j \geq j_0$
implies $\lambda^j_N \geq 0$.  Consider the index set $I = \{\, j \in J
: j \geq j_0 \,\}$ and the net $(\lambda^i)_{i\in I} \subseteq
\mathbb{R}_+^{(\mathcal{A}')}$, which satisfies $A(\lambda^i) \buildrel
w \over \longrightarrow \chi_N$ and $c(\lambda^j) \longrightarrow z >
v(N)$.

Assume $L \leq 0$.  There exists a subnet $(\lambda^{j_i})_{i\in I}$ of
$(\lambda^j)_{j\in J}$ such that $\lambda^{j_i}_N \longrightarrow L$.
Define the net $(\bar\lambda^i)_{i\in I}$ as follows: for any $i \in I$
and for any $S \in \mathcal{A}'$ let
\begin{equation*}
\bar\lambda^i_S =
	\begin{cases}
\hphantom{-} 0                         & \text{if $S = N$,}\\
\hphantom{-} \lambda^{j_i}_S / (1 - L) & \text{otherwise.}\\
	\end{cases}
\end{equation*}
Then
$$
\eqarraywithdef {\hfil $\displaystyle #$& $\displaystyle {}#$\hfil \cr
A(\bar\lambda^i) &=
\sum_{\substack{ S\in\mathcal{A}' \\ S\neq N }} \frac{\lambda^{j_i}_S \chi_S}{1 - L} =
\frac{A(\lambda^{j_i}) - \lambda^{j_i}_N \chi_N}{1 - L} \cr &\buildrel w \over \longrightarrow
\frac{\chi_N - L \chi_N}{1 - L} = \chi_N \,, }
$$
and
$$
\eqarraywithdef {\hfil $\displaystyle #$& $\displaystyle {}#$\hfil \cr
c(\bar\lambda^i) &=
\sum_{\substack{ S\in\mathcal{A}' \\ S\neq N }} \frac{\lambda^{j_i}_S v(S)}{1 - L} =
\frac{c(\lambda^{j_i}) - \lambda^{j_i}_N v(N)}{1 - L} \cr &\longrightarrow
\frac{z - L v(N)}{1 - L} >
\frac{v(N) - L v(N)}{1 - L} = v(N) \,. }
$$

It follows that the left-hand side does not hold in either case, which
concludes the proof.
\end{proof}


\section{The main result}\label{main2}

The next result is our generalized Bondareva-Shapley Theorem.

\begin{theorem}\label{megszamjatek1}
For any game\/ $v \in \mathcal{G}^{\mathcal{A}'}$ it holds that\/ $\kcore(v) \neq \emptyset$ if and only if the game is\/ $\kappa$-balanced.
\end{theorem}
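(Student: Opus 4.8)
Here is how I would approach the proof.

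\medskip

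The plan is to instantiate the infinite LP pair~\eqref{LP} so that the primal program encodes $\kappa$-balancedness and the dual program encodes non-emptiness of the $\kappa$-core, and then to read off the equivalence from the duality theorem, Proposition~\ref{duality}. Concretely, I would take $X = \mathbb{R}^{(\mathcal{A}')}$, $Y = \Lambda(\mathcal{A})$ and $Y^* = \ba^\kappa(\mathcal{A})$ --- which is a dual pair of spaces by Lemma~\ref{dual-pair} --- together with the linear map $A\colon X \to Y$, $A(\lambda) = \sum_{S\in\mathcal{A}'} \lambda_S \chi_S$, the linear functional $c\colon X \to \mathbb{R}$, $c(\lambda) = \sum_{S\in\mathcal{A}'} \lambda_S v(S)$, the right-hand side $b = \chi_N \in Y$, and the convex cone $P = \mathbb{R}_\ast^{(\mathcal{A}')}$. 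For this data the set $D$ of Definition~\ref{supervalue} is exactly the cone $K_v$ of~\eqref{cone-K_v}, so by Remark~\ref{rem11} the game $v$ is $\kappa$-balanced if and only if $(\mathrm{P_{LP}})$ is superconsistent and its supervalue does not exceed $v(N)$.

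\medskip

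Next I would identify the dual program explicitly. Using $\mu'(\chi_S) = \mu(S)$ from~\eqref{funct}, for $\mu \in \ba^\kappa(\mathcal{A}) = Y^*$ the adjoint acts by $\bigl(A'(\mu')\bigr)(\lambda) = \mu'\bigl(A(\lambda)\bigr) = \sum_{S\in\mathcal{A}'} \lambda_S \mu(S)$, and the dual objective is $\mu'(b) = \mu(N)$. Evaluating the dual constraint $\bigl(A'(\mu')\bigr)(\lambda) \geq c(\lambda)$ on the $\lambda$'s supported on a single coordinate $S$ --- with both signs of $\lambda_N$ permitted, since $\lambda_N$ is sign-unrestricted in $\mathbb{R}_\ast^{(\mathcal{A}')}$ while $\lambda_S \geq 0$ for $S \neq N$ --- shows that $\mu'$ is dual feasible precisely when $\mu(N) = v(N)$ and $\mu(S) \geq v(S)$ for all $S \in \mathcal{A}'$. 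Hence the feasible set of $(\mathrm{D_{LP}})$ is exactly $\kcore(v)$ of Definition~\ref{core}, on which the dual objective $\mu \mapsto \mu(N)$ is constantly equal to $v(N)$. In particular $(\mathrm{D_{LP}})$ is consistent if and only if $\kcore(v) \neq \emptyset$, and in that case its value is $v(N)$.

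\medskip

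It then remains to handle the primal side and to chain everything together. The single vector $\lambda$ with $\lambda_N = 1$ and $\lambda_S = 0$ for $S \neq N$ lies in $\mathbb{R}_\ast^{(\mathcal{A}')}$ and satisfies $A(\lambda) = \chi_N$, $c(\lambda) = v(N)$; thus $(\chi_N, v(N)) \in K_v$, so $(\mathrm{P_{LP}})$ is \emph{always} superconsistent and its supervalue is always at least $v(N)$. Combining this with the first paragraph, the game is $\kappa$-balanced if and only if the supervalue of $(\mathrm{P_{LP}})$ equals $v(N)$ (a finite number), if and only if --- by Proposition~\ref{duality} --- $(\mathrm{D_{LP}})$ is consistent with finite value $v(N)$, if and only if $\kcore(v) \neq \emptyset$. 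The only point requiring care is the bookkeeping at the ends of this chain: that superconsistency of $(\mathrm{P_{LP}})$ is automatic, and that whenever either program is ``feasible'' its (super)value is pinned down to $v(N)$, so that the finiteness hypothesis of Proposition~\ref{duality} is met; both of these reduce to the single observation $(\chi_N, v(N)) \in K_v$. I do not anticipate a genuine obstacle here: the substantive work --- the infinite-dimensional strong duality theorem, the identification of $(\Lambda(\mathcal{A}))^\star$ with $\ba(\mathcal{A})$, and the dual-pair property of $\ba^\kappa(\mathcal{A})$ --- has already been carried out in the preceding sections, and what is left is to verify that the sign-unrestricted weight on the grand coalition produces exactly the efficiency equality $\mu(N) = v(N)$ in the dual.
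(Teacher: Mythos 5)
Your proposal is correct and follows essentially the same route as the paper's own proof: the identical LP instantiation ($X = \mathbb{R}^{(\mathcal{A}')}$, $P = \mathbb{R}_\ast^{(\mathcal{A}')}$, $Y = \Lambda(\mathcal{A})$, $Y^* = \ba^\kappa(\mathcal{A})$, $b = \chi_N$), the observation that the trivial weight system $\lambda_N = 1$ makes $(\mathrm{P_{LP}})$ superconsistent with supervalue at least $v(N)$, the identification of the dual feasible set with $\kcore(v)$ via the sign-unrestricted grand-coalition weight, and the appeal to Proposition~\ref{duality}. Your explicit verification of the dual constraints by testing singly supported $\lambda$'s is just a slightly more detailed writing-out of the step the paper states as an observation.
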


\begin{proof}
Put $X = \mathbb{R}^{(\mathcal{A}')}$, \ $P = \mathbb{R}_\ast^{(\mathcal{A}')}$, \ $Y = \Lambda(\mathcal{A})$, and $Y^* = \ba^\kappa(\mathcal{A})$, moreover define the mapping $A\colon \mathbb{R}^{(\mathcal{A}')} \to \Lambda(\mathcal{A})$ by $A(\lambda) = \sum_{S\in\mathcal{A}'} \lambda_S \chi_S$, let $b = \chi_N$, and define the functional $c\colon \mathbb{R}^{(\mathcal{A}')} \to \mathbb{R}$ by $c(\lambda) = \sum_{S\in\mathcal{A}'} \lambda_S v(S)$. Now, consider the programs $(\mathrm{P_{LP}})$~and~$(\mathrm{D_{LP}})$ of~\eqref{LP}.

Notice that program $(\mathrm{P_{LP}})$~is superconsistent and its supervalue is at least~$v(N)$. (Consider that $\bigl(A(\lambda), c(\lambda)\bigr) \in K_v \subseteq \,\overline{\!K_v}$ for $\lambda \in \mathbb{R}^{(\mathcal{A}')}$ with $\lambda_N = 1$ and $\lambda_S = 0$ for $S \neq N$.) Then the game is $\kappa$-balanced (Definition~\ref{kiegyensulyozott}) if and only if the supervalue of~$(\mathrm{P_{LP}})$ is finite and not greater than~$v(N)$ (Remark~\ref{rem11}).

Moreover, observe that a set function $\mu \in \ba^\kappa(\mathcal{A})$ is feasible for~$(\mathrm{D_{LP}})$ if and only if $\mu(S) \geq v(S)$ for all $S \in \mathcal{A}'$ and $\mu(N) = v(N)$. Thus program $(\mathrm{D_{LP}})$~is equivalent to finding an element of $\kcore(v)$, and its value is~$v(N)$ if it is consistent, and its value is~$+\infty$ otherwise.

Therefore by Proposition~\ref{duality} the game has a non-empty $\kcore$ (program $(\mathrm{D_{LP}})$~is consistent) if and only if it is $\kappa$-balanced (the supervalue of program $(\mathrm{P_{LP}})$ is not greater than~$v(N)$).
\end{proof}


If the player set $N$~is finite, then so is $\mathcal{A}' \subseteq \mathcal{P}(N)$, whence the cone $K_v$~is closed. Then by Lemma~\ref{egyenloseg} $\kappa$-balancedness reduces to Schmeidler balancedness, which is (ordinary) balancedness
%
%
\citep{Bondareva1963,Shapley1967,Faigle1989},
and the $\kcore$ is the (ordinary) core in the finite case.
We thus obtain the classical Bondareva-Shapley Theorem as a corollary of Theorem~\ref{megszamjatek1}:

\begin{corollary}[Bondareva-Shapley Theorem]\label{cor1}
If\/ $N$~is finite, then the core of a game with or without restricted cooperation is non-empty if and only if the game is balanced.
\end{corollary}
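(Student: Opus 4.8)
The statement to prove is Corollary~\ref{cor1}, the classical Bondareva--Shapley Theorem, as a consequence of Theorem~\ref{megszamjatek1}. The plan is to specialize the general theorem to a finite player set and check that every ingredient collapses to its finite-dimensional counterpart.

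First I would observe that if $N$ is finite, then $\mathcal{P}(N)$ is finite, hence $\mathcal{A}'$ and its field hull $\mathcal{A}$ are finite collections of sets. Consequently $\mathbb{R}^{(\mathcal{A}')}$ is a finite-dimensional real vector space, the map $A(\lambda) = \sum_{S\in\mathcal{A}'}\lambda_S\chi_S$ is a linear map between finite-dimensional spaces, and the cone $K_v$ of~\eqref{cone-K_v} is the image under a linear map of the polyhedral cone $\mathbb{R}_*^{(\mathcal{A}')}$. A finitely generated cone in a finite-dimensional space is closed, so $\overline{K_v} = K_v$. This is the key structural fact that makes the net machinery unnecessary in the finite case.

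Next I would argue that closedness of $K_v$ forces $\kappa$-balancedness to reduce to Schmeidler balancedness, and the latter to ordinary balancedness. Since $\overline{K_v} = K_v$, the condition ``$z\le v(N)$ for all $z$ with $(\chi_N,z)\in\overline{K_v}$'' in Definition~\ref{kiegyensulyozott} becomes ``$z\le v(N)$ for all $z$ with $(\chi_N,z)\in K_v$'', i.e.\ $\sum_S\lambda_S v(S)\le v(N)$ whenever $\sum_S\lambda_S\chi_S=\chi_N$ with $\lambda\in\mathbb{R}_*^{(\mathcal{A}')}$. The paper already notes (just after Lemma~\ref{egyenloseg}, and explicitly in the paragraph following the proof of Theorem~\ref{megszamjatek1}) that in the finite case this coincides, via Lemma~\ref{egyenloseg} (whose nets may be taken constant) with Schmeidler balancedness of Definition~\ref{kiegyensulyozott3}, and the remark after Definition~\ref{kiegyensulyozott3} records that Schmeidler balancedness and ordinary balancedness \citep{Bondareva1963,Shapley1967,Faigle1989} coincide for finite games. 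I would simply chain these three equivalences.

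Finally I would note that likewise $\ba^\kappa(\mathcal{A}) = \ba(\mathcal{A}) = \mathbb{R}^{\mathcal{A}}$ when $\mathcal{A}$ is finite --- every additive set function on a finite field is automatically $\kappa$-continuous, since every monotone $\kappa$-net in a finite collection is eventually constant --- so $\kcore(v)$ coincides with the ordinary core $\{\mu : \mu(N)=v(N),\ \mu(S)\ge v(S)\ \forall S\in\mathcal{A}'\}$. Applying Theorem~\ref{megszamjatek1} then yields: the core is non-empty iff the game is $\kappa$-balanced iff the game is balanced. I do not anticipate a serious obstacle here; the only point requiring a line of justification is the closedness of the finitely generated cone $K_v$ (a standard fact, e.g.\ the Minkowski--Weyl theorem), and the bookkeeping that the two ``extensions'' built into $\kappa$-balancedness --- passing to nets and freeing the sign of $\lambda_N$ --- are both vacuous in finite dimension, the first because finite-dimensional polyhedral cones are closed and the second by the $L\le 0$ branch of Lemma~\ref{egyenloseg} applied with constant nets.
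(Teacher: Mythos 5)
Your proposal is correct and follows essentially the same route as the paper: the paper's own derivation likewise notes that for finite $N$ the cone $K_v$ is closed, invokes Lemma~\ref{egyenloseg} to collapse $\kappa$-balancedness to Schmeidler balancedness (hence to ordinary balancedness), identifies the $\kcore$ with the ordinary core, and then applies Theorem~\ref{megszamjatek1}. Your additional justifications (Minkowski--Weyl closedness of the finitely generated cone, eventual constancy of monotone nets in a finite field, and the $L\le 0$ branch of Lemma~\ref{egyenloseg} with constant nets) simply make explicit the steps the paper leaves to the reader.
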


Regarding Theorem~\ref{megszamjatek1}, it is worth mentioning that while \citet{Bondareva1963} applied the strong duality theorem to prove the Bondareva-Shapley Theorem, \citet{Shapley1967} used a different approach. We do not go into the details, but we remark that the common point in both approaches is the application of a separating hyperplane theorem. In other words, both \citeauthor{Bondareva1963}'s and \citeauthor{Shapley1967}'s approaches are based on the same separating hyperplane theorem, practically their result is a direct corollary of that. Here we use the strong duality theorem for infinite LPs \citep[Proposition~\ref{duality},][]{AndersonNash1987}, which is also a direct corollary of the same separating hyperplane theorem.

\subsection{The $\sigma$-additive case}

In this subsection let $\kappa = \aleph_0$. Then $\ba^\kappa(\mathcal{A}) = \ca(\mathcal{A})$, the space of all bounded countably additive set functions on~$\mathcal{A}$. Given a game $v \in \mathcal{G}^{\mathcal{A}'}$, its $\kcore$ is the $\sigma$-additive core $\cacore(v)$ introduced by Definition~\ref{cacore}.

In the next example we demonstrate that there exists a Schmeidler $\kappa$-balanced ($\aleph_0$-balanced) non-negative game without restricted cooperation having its $\cacore$ empty.

\begin{example}\label{pl2}
Let the player set $N = \mathbb{N}$, the system of coalitions
$\mathcal{A} = \mathcal{P}(\mathbb{N})$, and the game $v$~be defined as
follows: for any $S \in \mathcal{A}$ let
\begin{equation*}
v(S) =
	\begin{cases}
\hphantom{-} 1 & \text{if $\#(N \setminus S) \leq 1$,}\\
\hphantom{-} 0 & \text{otherwise.}\\
	\end{cases}
\end{equation*}
We show that $\cacore(v) = \emptyset$.  If $\mu \in \cacore(v)$, then
$\mu\bigl(N \setminus \{n\}\bigr) \geq v\bigl(N \setminus \{n\}\bigr) =
1$ and $v(N) = 1$, whence $\mu\bigl(\{n\}\bigr) \leq 0$.  So $0 \geq
\sum_{n=1}^\infty \mu\bigl(\{n\}\bigr) = \mu(N) = v(N) = 1$, a
contradiction.

We now show that, if $(\chi_N,z) \in \,\overline{\!K_v^+}$,
see~\eqref{cone-K_v^+},
then $z \leq 1 = v(N)$.  We have $(\chi_N,z) \in
\,\overline{\!K_v^+}$ if and only if each neighborhood of the point
$(\chi_N,z)$ intersects the cone~$K_v^+$.  In particular, if $(\chi_N,z)
\in \,\overline{\!K_v^+}$, then for any natural number~$m$ and for any
$\varepsilon > 0$ there exists a point $(f,t) \in K_v^+$ such that
$f$~belongs to the weak neighborhood
\begin{equation*}
\bigl\{\, f \in \Lambda(\mathcal{A}) :
	\bigl|\delta'_i(f) - 1\bigr| < \varepsilon
	\enskip \text{for} \enskip i = 1,\, \ldots,\, m
\,\bigr\} \,,
\end{equation*}
where $\delta'_i$~is the continuous linear functional induced by the
Dirac measure $\delta_i$ concentrated at~$i$, see~\eqref{funct}, and
$t$~belongs to the neighborhood $\bigl\{\, t \in \mathbb{R} :
\mathopen|t - z\mathclose| < \varepsilon \,\bigr\}$.  Hence, we have a
natural number~$n$, some distinct sets $S_0, S_1, \ldots, \allowbreak
S_n \in \mathcal{A}$, and some non-negative
$\lambda_{S_0}$,~$\lambda_{S_1}$,~\dots,~$\lambda_{S_n}$ such that $f =
\lambda_{S_0} \chi_{S_0} + \lambda_{S_1} \chi_{S_1} + \cdots +
\lambda_{S_n} \chi_{S_n}$ and
\begin{equation}\label{pl2aux1}
\biggl|
\sum_{\substack{ j=0 \\ \hidewidth S_j\ni i \, \hidewidth}}^n \lambda_{S_j} - 1
\biggr| < \varepsilon \qquad \text{for} \quad i = 1,\, \ldots,\, m
\end{equation}
with
\begin{equation}\label{pl2aux2}
\biggl|
\sum_{j=0}^n \lambda_{S_j} v(S_j) - z
\biggr| =
\biggl|
\sum_{\substack{ j=0 \\ \hidewidth \#(N\setminus S_j)\leq 1\, \hidewidth}}^n \lambda_{S_j} - z
\biggr| < \varepsilon \,.
\end{equation}
We can assume w.l.o.g.\ that $S_0 = N$, as well as $\#(N \setminus S_j) = 1$
for $j = 1$,~\dots,~$n_1$ and $\#(N \setminus S_j) > 1$ for $j = n_1 +
1$,~\dots,~$n$, where $n_1 \leq n$.

Everything is clear if there exists an $i \in \{1, \ldots, m\}$ such
that $i \in \bigcap_{j=1}^{n_1} S_j$.  Then by~\eqref{pl2aux1}
$$
\sum_{\substack{ j = 0 \\ \hidewidth \#(N\setminus S_j) \leq 1 \, \hidewidth}}^n \lambda_{S_j} =
\sum_{j=0}^{n_1} \lambda_{S_j} \leq
\sum_{\substack{ j = 0 \\ \hidewidth S_j \ni i \, \hidewidth}}^n \lambda_{S_j}
< 1 + \varepsilon \,,
$$
whence $z < 1 + 2\varepsilon$ by~\eqref{pl2aux2}.

In the other case we have $m \leq n_1$ and, because the sets
$S_0$,~$S_1$,~\dots,~$S_n$ are pairwise distinct, for $i =
1$,~\dots,~$m$ we can assume w.l.o.g.\ that $S_i = N \setminus \{i\}$.
By~\eqref{pl2aux1}
$$
\sum_{j=0}^{n_1} \lambda_{S_j} - \lambda_{S_i} =
\sum_{\substack{ j=0 \\ \hidewidth j\neq i \hidewidth}}^{n_1} \lambda_{S_j} \leq
\sum_{\substack{ j=0 \\ \hidewidth S_j\ni i \, \hidewidth}}^n \lambda_{S_j} <
1 + \varepsilon
\qquad \text{for} \quad i = 1,\, \ldots,\, m \,.
$$
Summing up, we get $m\sum_{j=0}^{n_1} \lambda_{S_j} - \sum_{i=1}^m
\lambda_{S_i} < m + m\varepsilon$, whence $m\sum_{j=0}^{n_1}
\lambda_{S_j} - \sum_{j=0}^{n_1} \lambda_{S_i} < m + m\varepsilon$.  It
then follows
$$
\sum_{\substack{ j=0 \\ \hidewidth \#(N\setminus S_j)\leq 1 \, \hidewidth}}^n \lambda_{S_j} =
\sum_{j=0}^{n_1} \lambda_{S_j} <
\frac{ m }{ m - 1} (1 + \varepsilon) \,.
$$
Taking~\eqref{pl2aux2} into account, we obtain
\begin{equation}\label{pl2fin}
z < \frac{ m }{ m - 1} (1 + \varepsilon) + \varepsilon \,.
\end{equation}

Since $1 + 2 \varepsilon < (1 + \varepsilon)m / (m - 1) + \varepsilon$, inequality \eqref{pl2fin}~holds in both cases.  By that $m \geq
2$ and $\varepsilon > 0$ can be arbitrary, we conclude that $z \leq 1$.
\end{example}

\begin{remark}
Consider the game $v$~from Example~\ref{pl2}.  Since $\cacore(v) =
\emptyset$, the game is not $\kappa$-balanced ($\aleph_0$-balanced).  To see this, consider
the sequence $(\lambda^i)_{i=1}^\infty$, with $\lambda^i \in
\mathbb{R}_*^{(\mathcal{A})}$, defined as follows: for any $i \in
\mathbb{N}$ and for any $S \in \mathcal{A}$ let
\begin{equation*}
\lambda^i_S =
	\begin{cases}
-(i - 2)           & \text{if $S = N$,}\\
\hphantom{-} 1     & \text{if $S = N \setminus \{n\}$ for $n = 1$,~\dots,~$i$,}\\
\hphantom{-} 0     & \text{otherwise.}\\
	\end{cases}
\end{equation*}
Then $\sum_{S\in\mathcal{A}} \lambda^i_S \chi_S = 2\chi_N -
\chi_{\{1,\ldots,i\}} \buildrel w \over \longrightarrow \chi_N$, where
the weak convergence in the space~$\Lambda(\mathcal{A})$ is with respect
to~$\ca(\mathcal{A})$, and $\sum_{S \in \mathcal{A}} \lambda^i_S v (S) =
2 > 1 = v(N)$.

Notice again that the sequence $(\lambda^i_N)_{i=1}^\infty = (2 -
i)_{i=1}^\infty$ is unbounded below.  If $(\lambda^i_N)_{i=1}^\infty$
were bounded below, then by Lemma~\ref{egyenloseg} we would get a
contradiction with Example~\ref{pl2}.
\end{remark}

Example~\ref{pl2} demonstrates that it is not sufficient to use $\mathbb{R}_+^{(\mathcal{A})}$ and~$K_v^+$ in the definition of $\kappa$-balancedness; that is, Schmeidler $\kappa$-balancedness is unable to reveal that the $\cacore$ is empty even for non-negative games without restricted cooperation. 

\begin{remark}\label{sigmaremark}
Reconsidering Schmeidler balancedness 
for the additive case, it is somehow tempting to ask whether the following ``$\sigma$-ex\-tension'' of condition~\eqref{tiszta} could lead to a similar result in the $\sigma$-additive case too:
\begin{equation}\label{sigma-balancedness}
\sup
\biggl\{\,
\sum_{S\in\mathcal{A}'} \lambda_S v(S) :
\sum_{S\in\mathcal{A}'} \lambda_S \chi_S = \chi_N, \, \lambda \in \mathbb{R}_+^{[\mathcal{A}']}
\biggr\} \leq v(N) \,,
\end{equation}
where $\mathbb{R}^{[\mathcal{A}']} = \bigl\{\, \lambda \in \mathbb{R}^{\mathcal{A}'} : \#\{\, S \in \mathcal{A}' : \lambda_S \neq 0 \,\} \leq \aleph_0 \,\bigr\}$ and $\mathbb{R}_+^{[\mathcal{A}']} = \bigl\{\, \lambda \in \mathbb{R}^{[\mathcal{A}']} : \lambda_S \geq 0$ for all $S \in \mathcal{A}' \,\bigr\}$. Moreover, the convergence of the sum $\sum_{S\in\mathcal{A}'} \lambda_S \chi_S$ is understood pointwise. In this case it is equivalent to say that the convergence is weak in the space~$\Lambda(\mathcal{A})$ with respect to $\ca(\mathcal{A})$.
%
%
If the sum $\sum_{S\in\mathcal{A}'} \lambda_S v(S)$ is convergent, but not absolutely convergent, then we put $\sum_{S\in\mathcal{A}'} \lambda_S v(S) :=+\infty$.


Denoting $A(\lambda) = \sum_{S\in\mathcal{A}'} \lambda_S \chi_S$ and $c(\lambda) = \sum_{S\in\mathcal{A}'} \lambda_S v(S)$, we can also consider the following generalization of~\eqref{sigma-balancedness}. Let $z \leq v(N)$ whenever there exists a net $(\lambda_i)_{i\in I} \subseteq \mathbb{R}_+^{[\mathcal{A}']}$ such that $A (\lambda^i) \buildrel w \over \longrightarrow \chi_N$ and $c (\lambda^i) \longrightarrow z$ where $z$~is finite. Then for each $i \in I$ there exists a sequence $(\lambda^{in})_{n=1}^\infty \subseteq \mathbb{R}^{(\mathcal{A}')}_+$ such that $A(\lambda^{in}) \buildrel w \over \longrightarrow A (\lambda^i)$ and $c(\lambda^{in}) \longrightarrow c(\lambda^i)$. Consequently, there exists a net $(\lambda_j)_{j\in J} \subseteq \mathbb{R}_+^{(\mathcal{A}')}$ such that $A(\lambda^j) \buildrel w \over \longrightarrow \chi_N$ and $c(\lambda^j) \longrightarrow z$. In other words, Schmeidler $\kappa$-balancedness covers such extensions of Schmeidler balancedness (Definition \ref{kiegyensulyozott3}) like~\eqref{sigma-balancedness}.

Moreover, in Example~\ref{pl2} we presented a non-negative Schmeidler $\kappa$-balanced game. Therefore, the presented game is balanced according to~\eqref{sigma-balancedness} too, but the $\cacore$ of the game is empty.
%
%
%
%
\end{remark}

\section{Conclusion}

We have generalized the Bondareva-Shapley Theorem to TU games with and without restricted cooperation, with infinitely many players, and with at least $\sigma$-additive cores:
we have proved for an arbitrary infinite cardinal~$\kappa$ that the $\kcore$ of a TU game with or without restricted cooperation is not empty if and only if the TU game is $\kappa$-balanced. The main conceptual messages of our results might be that in the proper notion of balancing weight system the weight of the grand coalition is sign unrestricted.

While $\kappa$-balancedness is universally a necessary and sufficient
condition for that the $\kcore$ of a game with or without restricted
cooperation is not empty (Theorem~\ref{megszamjatek1}), we have shown
that Schmeidler $\kappa$-balancedness (which implies Schmeidler
balancedness as well as its $\sigma$-extension, see
Remark~\ref{sigmaremark}) is not suitable for this purpose even in the
case of $\sigma$-additive core of a non-negative game without restricted
cooperation (Example~\ref{pl2}).

Notice that \citet{Kannai1969,Kannai1992} gave another necessary and sufficient condition for that the $\sigma$-additive core of a non-negative game without restricted cooperation is not empty. \citeauthor{Kannai1992}'s result is based on a very different approach and not related directly to our $\aleph_0$-balancedness condition.




\end{document}